\newtheorem{theorem}{Theorem}[section]
\newtheorem{corollary}[theorem]{Corollary}   
\newtheorem{definition}[theorem]{Definition}
\newtheorem{example}[theorem]{Example}
\newtheorem{remark}[theorem]{Remark}
\numberwithin{equation}{section}
\begin{document}

\title{On a Regularity-Conjecture of Generalized Binomial Edge Ideals}
\author{
Anuvinda J, Ranjana Mehta, \and Kamalesh Saha 
}
\date{}

\address{\small \rm Department of Mathematics,
SRM University - AP, Amaravati, Andhra Pradesh 522502, India.}
\email{anuvinda\_j@srmap.edu.in} 

\address{\small \rm Department of Mathematics,
SRM University - AP, Amaravati, Andhra Pradesh 522502, India.}
\email{ranjana.m@srmap.edu.in}

\address{\small \rm Chennai Mathematical Institute, Siruseri, Chennai, Tamil Nadu 603103, India}
\email{ksaha@cmi.ac.in}
\thanks{Kamalesh Saha is funded by NBHM Post-Doctoral Fellowship, sponsored by the National Board of Higher Mathematics, Government of India.}

\date{}

\subjclass[2020]{Primary 16E05, 13C13, 13P25, 05E40, 05C69}

\keywords{Generalized binomial edge ideals, Castelnuovo-Mumford regularity, $r$-compatible maps, clique disjoint edge sets}

\allowdisplaybreaks

\begin{abstract}
In this paper, we prove the upper bound conjecture proposed by Saeedi Madani \& Kiani on the Castelnuovo-Mumford regularity of generalized binomial edge ideals. We give a combinatorial upper bound of regularity for generalized binomial edge ideals, which is better than the bound claimed in that conjecture. Also, we show that the bound is tight by providing an infinite class of graphs.
\end{abstract}

\maketitle

\section{Introduction}
The ideals generated by all $t$-minors of a generic matrix $X$, denoted by $I_{t}(X)$, known as determinantal ideals, have been studied extensively in the literature due to their rich connections with invariant theory, representation theory, and combinatorics (see the survey paper \cite{bc03}). In the current scenario, people started considering the ideals generated by an arbitrary set of minors of a generic matrix. In this direction, one of the significant works is on the binomial edge ideals of graphs, which was introduced in 2010 via \cite{hhhrkara} and \cite{ohtani} independently. Binomial edge ideals can be seen as a generalization of the determinantal ideals $I_{2}(X)$ of a $2\times n$ generic matrix $X$. Later, Johannes Rauh in \cite{rauh13} introduced the notion of generalized binomial edge ideals as a generalization of binomial edge ideals. One of the primary motivations to study both binomial edge ideals and generalized binomial edge ideals is their connection to algebraic statistics, particularly their appearance in the study of conditional independence ideals. As a generalization of generalized binomial edge ideals, Ene et al. in \cite{ehhq14} introduced a more general class of binomial ideals, known as ``binomial edge ideals of pair of graphs". These ideals can be interpreted as ideals generated by certain sets of $2$-minors of a finite matrix of indeterminates and include the ideals studied in \cite{des98}, \cite{hs00}, \cite{hs04}. Lots of research have been done on binomial edge ideals of graphs in the last decade (see the survey paper \cite{binomsurvey}), but generalized binomial edge ideals and binomial edge ideals of pair of graphs are not that much explored, and there are numerous open problems in this direction. 
\par

Let $G_1$ and $G_2$ be two simple graphs on the vertex set $[m]=\{1,\ldots,m\}$ and $[n]=\{1,\ldots,n\}$ respectively. Let $X=(x_{ij})$ be an $(m\times n)$-matrix of indeterminates, and $K[X]$ be the polynomial ring in the variables $x_{ij}$, $1\leq i\leq m$ and $1\leq j\leq n$, where $K$ is a field. Let $e_1 = \{i, j\}\in E(G_1)$ with $i<j$ and $e_2=\{k, l\}\in E(G_2)$ with $k<l$. Corresponding to the pair $(e_1, e_2)$, consider the following $2$-minor of $X$:
$$ p_{e_1,e_2} = [i, j\vert k, l] = x_{ik}x_{jl}- x_{il}x_{jk}.$$
Then the \textit{binomial edge ideal of the pair} $(G_1, G_2)$, denoted by $\mathcal{J}_{G_1,G_2}$, is defined as
$$\mathcal{J}_{G_1,G_2} = ( p_{e_1,e_2} \mid e_1\in E(G_1), e_2\in E(G_2)).$$

\noindent Note that the \textit{generalized binomial edge ideal} of a graph $G$ introduced in \cite{rauh13} is nothing but the ideal $\mathcal{J}_{K_{m},G}$, where $K_{m}$ is the complete graph on the vertex set $[m]$. If $m=2$, then the ideal $\mathcal{J}_{K_{2},G}$ is the well-known \textit{binomial edge ideal} $J_{G}$ of $G$. Generalized binomial edge ideals behave well than arbitrary binomial edge ideals of pair of graphs as $\mathcal{J}_{G_1,G_2}$ is radical if and only if one of $G_1$ and $G_2$ is complete. Also, for unmixedness of $\mathcal{J}_{G_1,G_2}$, it should be a generalized binomial edge ideal (see \cite[Proposition 4.1]{skpair13}). The importance of considering $\mathcal{J}_{K_{m},G}$ is their appearance in algebraic statistic (see \cite{rauh13}, \cite{rauhay14}). We recommend the reader to see \cite{acr21}, \cite{bei18}, \cite{ci20}, \cite{ehhq14}, \cite{arvindgen20}, \cite{rauh13}, \cite{skpair13} \cite{szhu23} to get updated on the work on generalized binomial edge ideals so far.
\par

One of the most important homological invariants that has attracted much attention in the study of ideals associated with graphs is the \textit{Castelnuovo-Mumford regularity} (in short, regularity). For a graded ideal $I$ in a standard graded ring $R$, the regularity of $R/I$, denoted by $\mathrm{reg}(R/I)$, is defined as follows:
$$\mathrm{reg}(R/I) = \mathrm{max}\{j-i\mid  \beta_{i,j}(R/I)\neq 0\}.$$

In this paper, we prove the Saeedi Madani-Kiani conjecture \cite{skpair13} on the regularity of generalized binomial edge ideals, which Kumar proved in \cite{arvindgen20} for chordal graphs. However, we prove this conjecture for general graphs, and our given bound is even better than that given by Kumar in \cite{arvindgen20}. Moreover, our results generalize those given in \cite{rsmkconj21} for the binomial edge ideals of graphs. For a graph $G$, Saeedi et al. in \cite{skpair13} conjectured that $\mathrm{reg}(S/J_{G})\leq c(G)$, where $c(G)$ denotes the number of maximal cliques of $G$. Later, in \cite{rsmkconj21}, the conjecture was proved by providing a better upper bound $\eta(G)$ (see Definition \ref{defeta}). Again in \cite{skpair13}, the authors extend the conjecture for generalized binomial edge ideals: $\mathrm{reg}(S/\mathcal{J}_{K_m,G})\leq \mathrm{min}\{\binom{m}{2}c(G), e(G)\}$, where $e(G)$ denotes the number of edges of $G$. In \cite{arvindgen20}, Kumar proved this conjecture for chordal graphs by showing $\mathrm{reg}(S/\mathcal{J}_{K_{m},G})\leq \mathrm{min}\{(m-1)c(G), n-1\}$ for any chordal graph $G$ and this bound is better than the bound conjectured by Saeedi Madani-Kiani. In this paper, we solve this conjecture for the general case.
\par

The paper is written in the following manner: In Section \ref{preli}, we recall some notions, definitions, and results, which are necessary to explain our work. In Section \ref{secreg}, we introduce the concept of $r$-compatible map from $\mathcal{K\times G}$ to $\mathbb{N}_0=\mathbb{N}\cup \{0\}$, where $\mathcal{K}$ denotes the class of all complete graphs, and $\mathcal{G}$ denotes the class of all simple graphs. This map extends the notion of the compatible map defined in \cite{rsmkconj21}. In Theorem \ref{thmcomp}, we show that $\mathrm{reg}(S/\mathcal{J}_{K_m,G})\leq \Phi(K_m,G)$ for any $r$-compatible map $\Phi$. Next, we show the existence of a $r$-compatible map using a graph invariant $\eta(G)$, which is the maximum size of a clique disjoint edge set (see Definition \ref{defeta}). This gives a combinatorial upper bound of $\mathrm{reg}(S/\mathcal{J}_{k_m,G})$. In particular, we show that for any graph $G$, $\mathrm{reg}(S/\mathcal{J}_{K_{m},G})\leq \mathrm{min}\{(m-1)\eta(G), n-1\}$, which is better than both the bounds, given by Kumar for chordal graphs in \cite{arvindgen20} and claimed by Saaedi Madani-Kiani for general graphs in \cite{skpair13}. Moreover, our bound is the generalization of the upper bound given for binomial edge ideals of graphs in \cite{rsmkconj21}. In Section \ref{secexample}, we provide a non-trivial class of graphs for which the upper bound is tight.

\section{Preliminaries}\label{preli}
Let $G$ be a simple graph and $T\subseteq V(G)$. We write $G - T$ to denote the induced subgraph of $G$ on the vertex set $V(G)\setminus T$. For a vertex $v\in V(G)$, we use the notation $G-v$ to denote the induced subgraph $G-\{v\}$. A vertex $v\in V(G)$ is said to be a \textit{cut vertex} of $G$ if the number of connected components of $G-v$ is strictly greater than the number connected components of $G$. For $T\subseteq V(G)$, let $c_{G}(T)$ denote the number of connected components of $G-T$ and $G_{1},G_{2},\cdots,G_{c_{G}(T)}$ be the connected components of $G-T$. We denote by $\tilde{G}$ the complete graph on $V(G)$. For a positive integer $m\geq 2$ and $T\subseteq V(G)=[n]$, let us consider the following ideal:
$$P_{T}(K_{m},G)=( \{x_{ij}:(i,j)\in [m] \times T\},\mathcal{J}_{K_{m},\tilde{G_{1}}},\mathcal{J}_{K_{m},\tilde{G_{2}}},\cdots,\mathcal{J}_{K_{m},\tilde{G}_{c_{G}(T)}})$$
in the polynomial ring $S=K[x_{ij}:i\in [m],j \in [n]]$. Then $P_{T}(K_{m},G)$ is a prime ideal containing $\mathcal{J}_{K_{m},G}$ by \cite[Lemma 5]{rauh13}. A subset $T\subseteq [n]$ is said to be a \textit{cutset} of $G$ if each $i\in T$ is a cut vertex of the induced subgraph $G-(T\setminus\{i\})$. The set of all cutsets of $G$ is denoted by $\mathcal{C}(G)$. By \cite[Theorem 7]{rauh13}, $\mathcal{J}_{K_{m},G}$ is radical and the set of minimal prime ideals of $\mathcal{J}_{K_{m},G}$ is $\{P_{T}(K_{m},G)\mid T\in \mathcal{C}(G)\}$. Thus,
$$ \mathcal{J}_{K_{m},G} = \bigcap_{T \in \mathcal{C}(G)}P_{T}(K_{m},G).$$ 

Let $G$ be a graph. For a vertex $v$ of $G$, the set $\mathcal{N}_G(v):=\{u\mid \{u,v\}\in E(G)\}$ is called the \textit{neighbour set} of $v$ in $G$. The graph $G_{v}$, defined as
$$V(G_{v}):=V(G)\,\,\text{and}\,\, E(G_v):=\{\{u_{1},u_{2}\}\mid u_1,u_2\in \mathcal{N}_G(v)\}\cup E(G),$$
plays an important role in the study of binomial and generalized binomial edge ideals due to the following result.

\begin{theorem}[{\cite[Theorem 3.2]{arvindgen20}}] \label{thrm:2} Let $G$ be a finite simple graph and $v$ be an internal vertex of $G$. Then,
$$\mathcal{J}_{K_{m},G}=\mathcal{J}_{K_{m},G_{v}} \cap ((x_{iv}: i\in [m])+\mathcal{J}_{K_{m},G-v}).$$ 
\end{theorem}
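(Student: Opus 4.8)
The plan is to prove the two inclusions separately, obtaining the easy containment by inspecting generators and the hard one from the minimal prime decomposition $\mathcal{J}_{K_m,G}=\bigcap_{T\in\mathcal{C}(G)}P_T(K_m,G)$ recalled above. Write $L:=(x_{iv}:i\in[m])$. For $\mathcal{J}_{K_m,G}\subseteq \mathcal{J}_{K_m,G_v}\cap(L+\mathcal{J}_{K_m,G-v})$ I would check the natural generators $p_{e_1,e_2}$ with $e_1\in E(K_m)$ and $e_2\in E(G)$. Since $E(G)\subseteq E(G_v)$, each such generator already lies in $\mathcal{J}_{K_m,G_v}$. For the second factor I split $E(G)$ according to whether $e_2$ meets $v$: if $v\notin e_2$ then $e_2\in E(G-v)$ and $p_{e_1,e_2}\in\mathcal{J}_{K_m,G-v}$, while if $v\in e_2$ then every term of $p_{e_1,e_2}$ carries a factor $x_{iv}$, so $p_{e_1,e_2}\in L$. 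This settles the inclusion $\subseteq$.

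For the reverse inclusion I would not manipulate elements at all. Set $A:=\mathcal{J}_{K_m,G_v}$ and $B:=L+\mathcal{J}_{K_m,G-v}$. Since $\mathcal{J}_{K_m,G}$ is the intersection of the primes $P_T(K_m,G)$ over $T\in\mathcal{C}(G)$, it suffices to show $A\cap B\subseteq P_T(K_m,G)$ for \emph{every} cutset $T$; intersecting over all $T$ then gives $A\cap B\subseteq\mathcal{J}_{K_m,G}$. Because each $P_T(K_m,G)$ is prime, $A\cap B\subseteq P_T(K_m,G)$ is equivalent to $A\subseteq P_T(K_m,G)$ or $B\subseteq P_T(K_m,G)$, so the whole matter reduces to a dichotomy on cutsets.

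I would split on whether $v\in T$. If $v\in T$, then $L\subseteq P_T(K_m,G)$ because $P_T$ contains all $x_{iv}$ with $v\in T$, and $\mathcal{J}_{K_m,G-v}\subseteq\mathcal{J}_{K_m,G}\subseteq P_T(K_m,G)$ since edges of $G-v$ are edges of $G$; hence $B\subseteq P_T(K_m,G)$. The substantive case, and the step I expect to be the main obstacle, is $v\notin T$, where I must prove $A=\mathcal{J}_{K_m,G_v}\subseteq P_T(K_m,G)$. The generators of $A$ coming from $E(G)$ already lie in $\mathcal{J}_{K_m,G}\subseteq P_T$, so the real point is the ``new'' generators $[i,j\mid w_s,w_t]$ attached to a clique edge $\{w_s,w_t\}$ with $w_s,w_t\in\mathcal{N}_G(v)$. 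If $w_s\in T$ or $w_t\in T$, this minor lies in $(x_{ij}:(i,j)\in[m]\times T)\subseteq P_T$. Otherwise $v,w_s,w_t\notin T$, so the edges $\{v,w_s\}$ and $\{v,w_t\}$ survive in $G-T$, forcing $w_s$ and $w_t$ into a common connected component $G_c$ of $G-T$ (joined through $v$); as $\tilde{G_c}$ is complete on $V(G_c)$, the edge $\{w_s,w_t\}$ is an edge of $\tilde{G_c}$ and the minor lies in $\mathcal{J}_{K_m,\tilde{G_c}}\subseteq P_T(K_m,G)$. This connectivity observation is the crux: it is exactly the place where the definition of $G_v$ (completing $\mathcal{N}_G(v)$ into a clique) matches the component structure that $P_T$ records.

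Combining the two inclusions yields the claimed identity. I would note in passing that the argument never uses that $v$ is internal; the equality holds for any vertex and is simply uninformative when $G_v=G$. A different route would run through a diagonal Gröbner basis of $\mathcal{J}_{K_m,G_v}$ and a reduction argument, but the prime-decomposition approach above avoids all term-order bookkeeping and relies only on Rauh's description of $\mathcal{J}_{K_m,G}$ recalled in this section.
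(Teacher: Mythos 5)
Your proof is correct, and every step checks out: the easy inclusion by splitting the generators $p_{e_1,e_2}$ according to whether $e_2$ meets $v$, and the hard inclusion by showing, for each cutset $T\in\mathcal{C}(G)$, that $P_T(K_m,G)$ contains $(x_{iv}:i\in[m])+\mathcal{J}_{K_m,G-v}$ when $v\in T$, and contains $\mathcal{J}_{K_m,G_v}$ when $v\notin T$ --- the crux being exactly the connectivity observation that $w_s,v,w_t$ survive in a common component of $G-T$, so the new clique edges of $G_v$ land in $\mathcal{J}_{K_m,\tilde{G_c}}\subseteq P_T(K_m,G)$. One point of comparison to flag: the paper itself gives no proof of this statement; it is imported verbatim from \cite[Theorem 3.2]{arvindgen20}, so there is no in-paper argument to measure against. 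Your argument is a self-contained reconstruction relying only on Rauh's facts already quoted in Section \ref{preli} (primality of $P_T(K_m,G)$, the decomposition $\mathcal{J}_{K_m,G}=\bigcap_{T\in\mathcal{C}(G)}P_T(K_m,G)$), which is the standard route for Ohtani-type lemmas and is in the same spirit as the proof in the cited source. Your side remark is also accurate: the equality never uses that $v$ is internal; that hypothesis matters only downstream, in the induction on $\mathrm{iv}(G)$ in Theorem \ref{thmregcomp}, where one needs $G_v$, $G-v$, $G_v-v$ to have strictly fewer internal vertices than $G$, which fails to give progress when $v$ is free (since then $G_v=G$).
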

 
A vertex $v\in V(G)$ is said to be a \textit{free} vertex of $G$ if the induced subgraph of $G$ on the vertex set $\mathcal{N}_{G}(v)$ is a complete graph. Note that $v$ is always a free vertex of $G_v$. If $v$ is not a free vertex of $G$, we call it a \textit{non-free} vertex or an \textit{internal} vertex of $G$. We will denote the number of internal vertices of $G$ by $\mathrm{iv}(G)$. A \textit{clique} of a graph $G$ is an induced subgraph of $G$ that is a complete graph.


%


\section{Regularity bound for generalized binomial edge ideals}\label{secreg}

In this section, we generalize the notion of compatible map given in \cite{rsmkconj21}. Using that map, we give a combinatorial upper bound of $\mathrm{reg}(S/\mathcal{J}_{K_m,G})$, which solve the conjecture on regularity of generalized binomial edge ideals given in \cite{skpair13}. Moreover, our given bound is better than the bound claimed in that conjecture.

\begin{definition}\label{defcomp}{\rm
Let $\mathcal{G}$ be the set of all simple graphs and $\mathcal{K}$ be the set of all complete graphs. Let $ G \in \mathcal{G}$ be a graph on $[n]$ and $K_{m}\in \mathcal{K}$ be a complete graph on $m$ vertices. We set $\widehat{G}= G - \mathrm{Is}(G)$, where $\mathrm{Is}(G)$ is the set of isolated vertices of $G$. We define a map $\Phi:\mathcal{K \times G}\rightarrow\mathbb{N}_{0}$ as $r$-\textit{compatible} if
\begin{enumerate}
\item $\Phi(K_{m},\widehat{G})\leq \Phi(K_{m},G)$ for every $ G \in \mathcal{G}$
\item If $G= \bigsqcup_{i=1}^{t}K_{n_{i}}$, where $n_{i}\geq 2$ for all $1\leq i\leq t$, then $\Phi(K_{m},G)\geq k$, where $k=\sum_{i=1}^{t} \mathrm{min}\{ m-1,n_{i}-1\}$.
\item If $G\neq \bigsqcup_{i=1}^{t}K_{n_{i}}$, then there exists a non-free vertex $v \in V(G)$ such that the following conditions hold:
\begin{enumerate}
\item $\Phi(K_{m},G-v)\leq \Phi(K_{m},G)$
\item $\Phi(K_{m},G_{v})\leq \Phi(K_{m},G)$
\item $\Phi(K_{m},G_{v}-v)+1 \leq \Phi(K_{m},G)$.
\end{enumerate}
\end{enumerate}
}
\end{definition}

The following theorem establishes a general upper bound for the regularity of generalized binomial edge ideals.

\begin{theorem}\label{thmregcomp}
 Let $G$ be a graph on [n] and $\Phi$ be a $r$-compatible map. Then $$reg (S/\mathcal{J}_{K_{m},G})\leq \Phi(K_{m},G)$$ 
 \end{theorem}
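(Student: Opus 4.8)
The plan is to argue by induction, with the three clauses of condition (3) tailored exactly to the three terms that will appear. A natural well-founded measure is the pair $(n,\mathrm{iv}(G))$ ordered lexicographically, where $n=\abs{V(G)}$ and $\mathrm{iv}(G)$ is the number of internal vertices. The base case is $\mathrm{iv}(G)=0$, which occurs precisely when $G$ is a disjoint union of cliques together with some isolated vertices. Here I would first pass to $\widehat{G}$: the isolated vertices only contribute variables not occurring in $\mathcal{J}_{K_m,G}$, so $\mathrm{reg}(S/\mathcal{J}_{K_m,G})=\mathrm{reg}(S/\mathcal{J}_{K_m,\widehat G})$, and $\widehat{G}=\bigsqcup_{i=1}^{t}K_{n_i}$ with $n_i\ge 2$. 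Since the blocks live in disjoint sets of variables, regularity is additive, and for a single clique $\mathcal{J}_{K_m,K_{n_i}}=I_2(X)$ is the ideal of $2$-minors of a generic $m\times n_i$ matrix, whose quotient has regularity $\min\{m-1,n_i-1\}$. Thus $\mathrm{reg}(S/\mathcal{J}_{K_m,\widehat G})=\sum_{i=1}^{t}\min\{m-1,n_i-1\}=k$, and conditions (1) and (2) give $k\le\Phi(K_m,\widehat G)\le\Phi(K_m,G)$, settling the base case.

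For the inductive step assume $\mathrm{iv}(G)\ge 1$, so $G\neq\bigsqcup_i K_{n_i}$ and condition (3) supplies a non-free vertex $v$. Since $v$ is internal, Theorem~\ref{thrm:2} gives $\mathcal{J}_{K_m,G}=I_1\cap I_2$ with $I_1=\mathcal{J}_{K_m,G_v}$ and $I_2=(x_{iv}:i\in[m])+\mathcal{J}_{K_m,G-v}$. I would next record the ideal identity $I_1+I_2=(x_{iv}:i\in[m])+\mathcal{J}_{K_m,G_v-v}$: modding out the variables $x_{iv}$ kills every generator of $\mathcal{J}_{K_m,G_v}$ involving column $v$, leaving $\mathcal{J}_{K_m,G_v-v}$, while $\mathcal{J}_{K_m,G-v}\subseteq\mathcal{J}_{K_m,G_v-v}$ because $G-v$ is a subgraph of $G_v-v$. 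Feeding $I_1,I_2$ into the short exact sequence
\[
0\to S/(I_1\cap I_2)\to S/I_1\oplus S/I_2\to S/(I_1+I_2)\to 0,
\]
using $\mathrm{reg}(A)\le\max\{\mathrm{reg}(B),\mathrm{reg}(C)+1\}$, and noting that quotienting by the regular sequence of linear forms $x_{iv}$ leaves regularity unchanged, yields
\[
\mathrm{reg}(S/\mathcal{J}_{K_m,G})\le\max\{\mathrm{reg}(S/\mathcal{J}_{K_m,G_v}),\ \mathrm{reg}(S/\mathcal{J}_{K_m,G-v}),\ \mathrm{reg}(S/\mathcal{J}_{K_m,G_v-v})+1\}.
\]

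To close the induction I must verify that $G_v$, $G-v$, and $G_v-v$ are each strictly smaller in the lexicographic order. The graphs $G-v$ and $G_v-v$ have $n-1$ vertices, so they drop in the first coordinate; for $G_v$, which has the same vertex set, the decisive point is the monotonicity $\mathrm{iv}(G_v)<\mathrm{iv}(G)$. This holds because $v$ becomes free in $G_v$ while no vertex free in $G$ can turn internal: if $w$ is free and $w\in\mathcal N_G(v)$, one checks $\mathcal N_{G_v}(w)=\{v\}\cup(\mathcal N_G(v)\setminus\{w\})$, which is a clique in $G_v$, and the case $w\notin\mathcal N_G(v)$ is immediate. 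With this the induction hypothesis applies to all three terms, and conditions (3)(a)–(c) convert the displayed bound into $\max\{\Phi(K_m,G_v),\Phi(K_m,G-v),\Phi(K_m,G_v-v)+1\}\le\Phi(K_m,G)$, completing the proof. The only genuinely delicate ingredients are the base-case regularity of the determinantal ring and the monotonicity $\mathrm{iv}(G_v)<\mathrm{iv}(G)$ that makes the recursion well-founded; everything else is bookkeeping dictated exactly by the three defining properties of an $r$-compatible map.
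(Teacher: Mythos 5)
Your proof is correct and follows essentially the same route as the paper: induction whose base case is a disjoint union of complete graphs handled via additivity of regularity and the determinantal bound $\min\{m-1,n_i-1\}$, then the decomposition of Theorem \ref{thrm:2}, the same short exact sequence and regularity lemma, and conditions (1)--(3) of Definition \ref{defcomp} to close the induction. The only difference is bookkeeping: the paper inducts on $\mathrm{iv}(G)$ alone, citing \cite[Lemma 3.4]{arvindgen20} for the fact that $G_v$, $G-v$, and $G_v-v$ all have strictly fewer internal vertices, whereas you induct on the lexicographic pair $(n,\mathrm{iv}(G))$ and verify the one needed monotonicity $\mathrm{iv}(G_v)<\mathrm{iv}(G)$ directly.
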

\begin{proof} Let $\mathrm{iv}(G)$ denote the number of non-free vertices of a graph $G$. We prove this theorem by mathematical induction on $\mathrm{iv}(G)$. If $\mathrm{iv}(G)=0$, then $G$ is a disjoint union of complete graphs. Let $\widehat{G}= \bigsqcup_{i=1}^{t}K_{n_{i}}$. Then $n_{i}\geq 2$ for all $1\leq i\leq t$. Since $\mathcal{J}_{K_{m},G}=\mathcal{J}_{K_{m},\widehat{G}}$ in $S$, we have $\mathrm{reg}(S/\mathcal{J}_{K_{m},G})=\mathrm{reg}(S/\mathcal{J}_{K_{m},\widehat{G}})$. If a graph $H$ is disconnected and has the connected components $H_{1},\cdots,H_{c}$ then $\mathcal{J}_{K_{m},H_1},\ldots, \mathcal{J}_{K_{m},H_c}$ being generated in pairwise disjoint set of variables, we have
$$ S/\mathcal{J}_{K_{m},H} \cong S_{1}/\mathcal{J}_{K_{m},H_{1}}\otimes \cdots \otimes S_{c}/\mathcal{J}_{K_{m},H_{c}},$$ where $S_{i}=K[x_{1j},\cdots,x_{mj}:j\in V(H_{i})]$. Using \cite[Proposition 3.3]{arvindgen20}, we get 
$$\mathrm{reg}(S/\mathcal{J}_{K_{m},G})= \sum_{i=1}^{t} \mathrm{reg}(S_{i}/\mathcal{J}_{K_{m},K_{n_i}})=k,$$ 
where $k=\sum_{i=1}^{t} min\{m-1,n_{i}-1\}$. Now conditions (1) and (2) in the definition of $r$-compatible map implies $k\leq \Phi(K_{m},\widehat{G})\leq \Phi(K_{m},G)$. Therefore, $\mathrm{reg} (S/\mathcal{J}_{K_{m},G})\leq \Phi(K_m,G)$ holds in this case.\par

Now, we consider $\mathrm{iv}(G)\neq 0$. Then there exists a non-free vertex $v \in [n]$ for $\Phi$ satisfying the condition (3) in Definition \ref{defcomp}. We assume that the assertion holds for all graphs $G'$ with $\mathrm{iv}(G')<\mathrm{iv}(G)$. By Theorem \ref{thrm:2}, $$\mathcal{J}_{K_{m},G}=\mathcal{J}_{K_{m},G_{v}} \cap ((x_{iv}: i\in [m])+\mathcal{J}_{K_{m},G-v}),$$
and it is easy to verify that 
$$\mathcal{J}_{K_{m},G_{v}}+((x_{iv}: i\in [m])+\mathcal{J}_{K_{m},G-v})= (x_{iv}: i\in [m])+\mathcal{J}_{K_{m},G_{v}-v}.$$
\noindent Thus, we obtain the following short exact sequence:
$$0 \longrightarrow S/\mathcal{J}_{K_{m},G} \longrightarrow S/\mathcal{J}_{K_{m},G_{v}} \oplus S/((x_{iv}: i\in [m])+\mathcal{J}_{K_{m},G-v})$$ $$  \longrightarrow S/((x_{iv}: i\in [m])+\mathcal{J}_{K_{m},G_{v}-v}) \longrightarrow 0,$$ 
namely,$$0 \longrightarrow S/\mathcal{J}_{K_{m},G} \longrightarrow S/\mathcal{J}_{K_{m},G_{v}} \oplus S_{v}/\mathcal{J}_{K_{m},G-v} \longrightarrow S_{v}/\mathcal{J}_{K_{m},G_{v}-v} \longrightarrow 0 $$ where $S_{v}= K[x_{ij}:i\in [m],j\in V(G-v)]$. Since $\mathrm{reg}(S/\mathcal{J}_{K_{m},G_{v}} \oplus S_{v}/\mathcal{J}_{K_{m},G-v})=\mathrm{max}\{\mathrm{reg}(S/\mathcal{J}_{K_{m},G_{v}}), \mathrm{reg}(S_{v}/\mathcal{J}_{K_{m},G-v})\}$, by the well-known regularity lemma, we get
{\small
\begin{equation}\label{eq1}
\mathrm{reg}(S/\mathcal{J}_{K_{m},G})\leq \mathrm{max} \{ \mathrm{reg}(S/\mathcal{J}_{K_{m},G_{v}}),\mathrm{reg}(S_{v}/\mathcal{J}_{K_{m},G-v}), \mathrm{reg}(S_{v}/\mathcal{J}_{K_{m},G_{v}-v})+1\}
\end{equation}
}
\noindent Due to \cite[Lemma 3.4]{arvindgen20}, by the induction hypothesis and Definition \ref{defcomp}, we get
\begin{equation}
\mathrm{reg}(S/\mathcal{J}_{K_{m},G_{v}}) \leq \Phi(K_{m},G_{v})\leq \Phi(K_{m},G),\label{eq2}
\end{equation}
\begin{equation}\label{eq3}
\mathrm{reg}(S_{v}/\mathcal{J}_{K_{m},G-v}) \leq \Phi(K_{m},G-v)\leq \Phi(K_{m},G),
\end{equation}
\begin{equation}\label{eq4}
\mathrm{reg}(S_{v}/\mathcal{J}_{K_{m},G_{v}-v}) +1 \leq \Phi(K_{m},G_{v}-v)+1 \leq \Phi(K_{m},G).
\end{equation}
From \eqref{eq1}, \eqref{eq2}, \eqref{eq3} and \eqref{eq4}, we have $\mathrm{reg}(S/\mathcal{J}_{K_{m},G})\leq \Phi(K_{m},G)$.
\end{proof}

\begin{definition}[{\cite[Definition 2.4]{rsmkconj21}}]\label{defeta} {\rm
Let $G \in \mathcal{G}$ be a graph and $\mathcal{M}\subseteq E(G)$ be a collection of edges with the property that no two elements of $\mathcal{M}$ belong to the same clique of $G$. Then, we call the set $\mathcal{M}$, a \textit{clique disjoint edge set} in $G$. Moreover, we assign the following graph invariant associated with clique disjoint sets of a graph $G$:
$$\eta(G):= \mathrm{max} \{ \vert\mathcal{N}\vert \mid \mathcal{N}\,\, \text{is a clique disjoint edge set in G}\}.$$
}
\end{definition}

In the following theorem, we show the existence of $r$-compatible map and give a combinatorial upper bound of $\mathrm{reg}(S/\mathcal{J}_{K_{m},G})$ using $\eta(G)$.

\begin{theorem}\label{thmcomp}
The map $\Phi:\mathcal{K \times G}\rightarrow\mathbb{N}_{0}$ defined by $$\Phi(K_{m},G) = \mathrm{min} \{(m-1)\eta(G), n-1\}$$ is $r$-compatible. Hence, $\mathrm{reg}(S/\mathcal{J}_{K_{m},G})\leq \mathrm{min}\{(m-1)\eta(G), n-1\}$.
\end{theorem}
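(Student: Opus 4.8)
The plan is to verify that the map $\Phi(K_m,G)=\min\{(m-1)\eta(G),\,n-1\}$ satisfies the three defining conditions of an $r$-compatible map in Definition \ref{defcomp}; the asserted regularity bound then follows immediately from Theorem \ref{thmregcomp}. The guiding observation is that $\eta$ is a purely combinatorial invariant, independent of $m$, and is monotone under passing to induced subgraphs: if $W\subseteq V(G)$, then the induced subgraph $G[W]$ satisfies $\eta(G[W])\le\eta(G)$, because a clique of $G[W]$ is exactly a clique of $G$ contained in $W$, so a clique disjoint edge set of $G[W]$ stays clique disjoint in $G$. In particular $\eta(G-v)\le\eta(G)$ for every vertex $v$. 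This monotonicity, together with the behaviour of $\eta$ under the operation $G\mapsto G_v$, will carry essentially all the weight; the factor $(m-1)$ and the truncation by $n-1$ only need to be tracked through each inequality.

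For condition (1), I would note that deleting isolated vertices changes neither the edges nor the cliques of $G$, so $\eta(\widehat G)=\eta(G)$, while the vertex count can only drop; hence $\Phi(K_m,\widehat G)=\min\{(m-1)\eta(G),\,\lvert V(\widehat G)\rvert-1\}\le\min\{(m-1)\eta(G),\,n-1\}=\Phi(K_m,G)$. For condition (2), with $G=\bigsqcup_{i=1}^t K_{n_i}$ and each $n_i\ge2$, the maximal cliques are exactly the components $K_{n_i}$, so a clique disjoint edge set contains at most one edge per component and $\eta(G)=t$. It then remains to check $\min\{(m-1)t,\,(\sum_i n_i)-1\}\ge\sum_{i=1}^t\min\{m-1,\,n_i-1\}=:k$, which splits into the two elementary estimates $(m-1)t\ge k$ (each summand is at most $m-1$) and $(\sum_i n_i)-1\ge k$ (since $\sum_i\min\{m-1,n_i-1\}\le\sum_i(n_i-1)=(\sum_i n_i)-t\le(\sum_i n_i)-1$).

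The substance of the proof is condition (3), and this is where I expect the main obstacle to lie. Here $G\ne\bigsqcup_i K_{n_i}$, so $G$ has a non-free vertex, and I must exhibit one particular non-free vertex $v$ satisfying (a)--(c). The key reduction is that these three $\Phi$-inequalities follow from three inequalities involving only $\eta$: namely $\eta(G-v)\le\eta(G)$ (which is automatic by monotonicity), $\eta(G_v)\le\eta(G)$, and $\eta(G_v-v)\le\eta(G)-1$. Indeed, (a) follows from $\eta(G-v)\le\eta(G)$ together with $\lvert V(G-v)\rvert-1=n-2\le n-1$; (b) follows from $\eta(G_v)\le\eta(G)$ since $G_v$ has the same $n$ vertices as $G$; and for (c) one writes $\Phi(K_m,G_v-v)+1=\min\{(m-1)\eta(G_v-v)+1,\,n-1\}$ and uses $\eta(G_v-v)\le\eta(G)-1$ to get $(m-1)\eta(G_v-v)+1\le(m-1)\eta(G)-(m-2)\le(m-1)\eta(G)$, so that both arguments of the minimum are dominated by the corresponding arguments of $\Phi(K_m,G)$. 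Thus everything reduces to the existence of a non-free vertex $v$ with $\eta(G_v)\le\eta(G)$ and $\eta(G_v-v)\le\eta(G)-1$.

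This last, purely graph-theoretic statement is exactly the vertex-selection lemma underlying the compatible-map argument for binomial edge ideals in \cite{rsmkconj21}, and since $\eta$ does not depend on $m$ it applies verbatim: I would invoke that result to obtain $v$, then lift its $\eta$-inequalities to the required $\Phi$-inequalities by the elementary min-and-factor bookkeeping indicated above. The genuinely hard combinatorial core---understanding how $\eta$ changes when one completes the neighbourhood of $v$ and then deletes $v$---is thereby isolated in the cited lemma; the remaining new work is only the routine check that the truncation by $n-1$ never spoils any of the three inequalities (including the degenerate case in which $\Phi(K_m,G)$ is realised by the $n-1$ term rather than the $(m-1)\eta(G)$ term), which the monotonicity observations of the first paragraph already handle.
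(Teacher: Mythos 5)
Your proposal is correct and follows essentially the same route as the paper's proof: verify the three conditions of Definition \ref{defcomp} using the monotonicity of $\eta$ under passing to induced subgraphs, delegate the combinatorial core $\eta(G_v)<\eta(G)$ to the argument of Theorem 2.5 in \cite{rsmkconj21} (valid since $\eta$ is independent of $m$), and conclude via Theorem \ref{thmregcomp}. The only cosmetic difference is your framing of condition (3) as requiring a carefully selected vertex, whereas the cited result in fact gives the strict inequality $\eta(G_v)<\eta(G)$ for \emph{every} non-free vertex $v$, which is exactly how the paper uses it.
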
 

\begin{proof} 
Fix $K_{m}$ and let $G$ be any graph with vertex set $[n]$. From the definition of $\eta$, we have, $\eta(\widehat{G})= \eta(G)$. Hence, $(m-1)\eta(\widehat{G})=(m-1)\eta(G)$. Relabeling the vertices of $G$, we can get $V(\widehat{G})=[\widehat{n}]$, where $\widehat{n}\leq n$. Thus, 
\begin{align*}
\Phi(K_{m},\widehat{G})& = \mathrm{min} \{(m-1)\eta(\widehat{G}), \widehat{n}-1\} \\& \leq \mathrm{min} \{(m-1)\eta(G), n-1\}\\& = \Phi(K_{m},G).
\end{align*} 
Now, let $G= \bigsqcup_{i=1}^{t}K_{n_{i}}$, where $n_{i}\geq 2$ for all $1\leq i\leq t$. Then it is clear that $\eta(G)= t$. Let $k= \sum_{i=1}^{t} min\{m-1,n_{i}-1\}$. Then we have, 
\begin{align*}
n_{1}-1 &\geq  min\{m-1,n_{1}-1\}\\ n_{2}-1 &\geq min\{m-1,n_{2}-1\} \\ &\vdots \\ n_{t}-1 &\geq  min\{m-1,n_{t}-1\} 
\end{align*}
This implies 
$$n_{1}+n_{2}+\cdots+n_{t}-t  \geq \sum_{i=1}^{t} \mathrm{min}\{m-1,n_{i}-1\},$$
i.e., $n-t\geq k$ and hence, $ n-1\geq k$. Now, $m-1 \geq \mathrm{min}\{m-1,n_{i}-1\}$ for all $1\leq i\leq t$ gives $$(m-1)t \geq \sum_{i=1}^{t} \mathrm{min}\{m-1,n_{i}-1\}=k.$$
Since $\eta(G)=t$, we have 
\begin{align*}
\Phi(K_{m},G) &= \mathrm{min}\{(m-1)\eta(G),n-1\}\\ &\geq k.
\end{align*}
To show $\Phi$ satisfies the condition $(3)$ of Definition \ref{defcomp}, we assume that $G$ is not a disjoint union of complete graphs. Then there exists a non-free vertex $v$ of $G$. From the proof of \cite[Theorem 2.5]{rsmkconj21}, we get $\eta(G-v)\leq \eta(G)$ and hence, $(m-1)\eta(G-v)\leq (m-1)\eta(G)$. This yields, 
\begin{align*}
\Phi(K_{m},G-v)& = min \{(m-1)\eta(G-v), n-2\} \\ &\leq min \{(m-1)\eta(G), n-1\} \\ & =\Phi(K_{m},G). 
\end{align*} 
Next for $ G_{v}$, again from the proof of \cite[Theorem 2.5]{rsmkconj21}, we get $\eta(G_{v})< \eta(G)$. Thus,
\begin{align*}
\Phi(K_{m},G_{v}) & = min \{(m-1)\eta(G_{v}), n-1\} \\ & \leq min \{(m-1)\eta(G), n-1\} \\ & = \Phi(K_{m},G).
\end{align*}
Now for $G_{v}-v$, we observe that $\eta(G_{v}-v)\leq \eta(G_{v})< \eta(G)$, which gives $(m-1)\eta(G_{v}-v)+1\leq (m-1)\eta(G)$. Therefore, we have 
\begin{align*}
\Phi(K_{m},G_{v}-v)+1 &= \mathrm{min} \{(m-1)\eta(G_{v}-v), n-2\}+1\\
&=\mathrm{min}\{(m-1)\eta(G_{v}-v)+1, n-2+1\}\\
&\leq \mathrm{min}\{(m-1)\eta(G), n-1\}\\
&=\Phi(K_{m},G).
\end{align*}
The given map $\Phi$ satisfies all the conditions of Definition \ref{defcomp}, and thus, $\Phi$ is an $r$-compatible map. Hence, by Theorem \ref{thmregcomp}, it follows that $\mathrm{reg}(S/\mathcal{J}_{K_m,G})\leq \mathrm{min}\{(m-1)\eta(G),n-1\}$, where $n$ is the number of vertices of $G$.
\end{proof}

\begin{remark}{\rm
Since $\eta(G)\leq c(G)$ for any graph $G$, Theorem \ref{thmcomp} solves the Saeedi Madani-Kiani conjecture on $\mathrm{reg}(S/\mathcal{J}_{K_{m},G})$ by providing a better upper bound.
}
\end{remark}

\begin{corollary} 
Let $G$ be a graph on [n]. Let $\Phi:\mathcal{K\times G}\rightarrow \mathbb{N}_{0}$ be the $r$-compatible map defined in Theorem \ref{thmcomp}. If $\Phi(K_{m},G_{v})= \Phi(K_{m},G)$ for a non-free vertex $v\in V(G)$, then $\mathrm{reg}(S/\mathcal{J}_{K_{m},G}) = n-1$.
\end{corollary}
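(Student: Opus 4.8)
The plan is to prove the two inequalities $\mathrm{reg}(S/\mathcal{J}_{K_m,G})\le n-1$ and $\ge n-1$ separately, the first being essentially a restatement of Theorem \ref{thmcomp} and the second carrying all the content. First I would extract from the hypothesis the fact that $\Phi(K_m,G)=n-1$. Recall from the proof of Theorem \ref{thmcomp} that $\eta(G_v)<\eta(G)$ for every non-free vertex $v$, so $(m-1)\eta(G_v)<(m-1)\eta(G)$. If we had $(m-1)\eta(G)\le n-1$, then both minima defining $\Phi(K_m,G)$ and $\Phi(K_m,G_v)$ would be attained by their first coordinate, and the strict inequality would give $\Phi(K_m,G_v)<\Phi(K_m,G)$, contradicting the hypothesis. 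Hence $(m-1)\eta(G)>n-1$, which forces $\Phi(K_m,G)=n-1$, and then $\Phi(K_m,G_v)=\Phi(K_m,G)=n-1$ forces $(m-1)\eta(G_v)\ge n-1$ as well. (This also forces $G$ to be connected, since a disconnected graph would satisfy $\mathrm{reg}\le n-c<n-1$.) By Theorem \ref{thmcomp} we get $\mathrm{reg}(S/\mathcal{J}_{K_m,G})\le n-1$, so only the lower bound remains.

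For the lower bound I would reuse the short exact sequence of Theorem \ref{thrm:2},
\[
0\to S/\mathcal{J}_{K_m,G}\to S/\mathcal{J}_{K_m,G_v}\oplus S_v/\mathcal{J}_{K_m,G-v}\to S_v/\mathcal{J}_{K_m,G_v-v}\to 0,
\]
writing $A,B,C$ for its three terms. Both $G-v$ and $G_v-v$ are graphs on $n-1$ vertices, so Theorem \ref{thmcomp} gives $\mathrm{reg}(S_v/\mathcal{J}_{K_m,G-v})\le n-2$ and $\mathrm{reg}(C)\le n-2$. In particular $\mathrm{Tor}_i(C)_{i+(n-1)}=0$ for all $i$, so the tail $\mathrm{Tor}_i(A)_{i+(n-1)}\to \mathrm{Tor}_i(B)_{i+(n-1)}\to \mathrm{Tor}_i(C)_{i+(n-1)}=0$ of the long exact sequence shows the left map is surjective; hence $\mathrm{Tor}_i(A)_{i+(n-1)}\neq 0$ whenever $\mathrm{Tor}_i(B)_{i+(n-1)}\neq 0$. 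Since the summand $S_v/\mathcal{J}_{K_m,G-v}$ has regularity $\le n-2$, the only part of $B$ that can be nonzero in slope $n-1$ is $S/\mathcal{J}_{K_m,G_v}$. Thus the whole statement reduces to the single claim $\mathrm{reg}(S/\mathcal{J}_{K_m,G_v})=n-1$: granting it, some $\mathrm{Tor}_i(S/\mathcal{J}_{K_m,G_v})_{i+(n-1)}\neq 0$, whence $\mathrm{Tor}_i(A)_{i+(n-1)}\neq 0$ and $\mathrm{reg}(S/\mathcal{J}_{K_m,G})\ge n-1$.

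The hard part will be exactly this reduced claim $\mathrm{reg}(S/\mathcal{J}_{K_m,G_v})\ge n-1$, where now $v$ is a \emph{free} vertex of $G_v$ and $\Phi(K_m,G_v)=n-1$. The difficulty is that the crude regularity lemma no longer suffices: when one applies Theorem \ref{thrm:2} again at an internal vertex $w$ of $G_v$, the resulting sequence typically has the middle term and the cokernel term of \emph{equal} regularity (e.g. for $G_v=K_4-e$ and $m=3$ all three terms have regularity $2$), so no strict jump is manufactured and neither the ``$\mathrm{reg}(B)=n-1$'' route nor the ``$\mathrm{reg}(C)+1$'' route closes. The honest approach is therefore to produce a nonzero extremal graded Betti number $\beta_{i,\,i+(n-1)}(S/\mathcal{J}_{K_m,G_v})\neq 0$ directly, by tracking the top-slope Tor class through the long exact sequence and showing that the connecting homomorphism $\mathrm{Tor}_{i+1}(C_1)_{i+(n-1)}\to \mathrm{Tor}_i(A_1)_{i+(n-1)}$ is nonzero — i.e. that the extremal syzygy of the cokernel term does \emph{not} lift to the middle term.

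Concretely I would set this up as an induction on $\mathrm{iv}(G_v)$, which is finite and strictly smaller than $\mathrm{iv}(G)$ by \cite[Lemma 3.4]{arvindgen20}; the base case $\mathrm{iv}=0$ is a single complete graph, where $\mathrm{reg}(S/\mathcal{J}_{K_m,K_n})=\min\{m-1,n-1\}$ equals $n-1$ precisely under the standing hypothesis $\Phi=n-1$. In the inductive step the task is to show the above non-lifting of the degree-$(n-1)$ syzygy, and I expect this to be the genuine obstacle: it is where the combinatorial input $(m-1)\eta(G_v)\ge n-1$ (the clique-disjoint edge bookkeeping) must be converted into the survival of a specific extremal Betti number, rather than into a mere numerical regularity bound.
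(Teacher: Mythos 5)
Your reduction is set up correctly as far as it goes: the hypothesis together with $\eta(G_v)<\eta(G)$ does force $\Phi(K_m,G)=n-1$ (and $(m-1)\eta(G_v)\ge n-1$), the upper bound then follows from Theorem \ref{thmcomp}, and your Tor-chasing argument correctly reduces the lower bound to the single claim $\mathrm{reg}(S/\mathcal{J}_{K_m,G_v})\ge n-1$. But the proposal stops exactly there by your own admission, so it has a genuine gap --- and the gap cannot be closed, because the claimed equality is false. The failure point is your parenthetical assertion that the hypothesis ``forces $G$ to be connected'': that inference is circular (it uses the conclusion you are trying to prove), and it is precisely where a counterexample lives. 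Take $G=P_3\sqcup K_2$ on $n=5$ vertices, $m=3$, and $v$ the middle vertex of the $P_3$. Then $\eta(G)=3$ and $G_v=K_3\sqcup K_2$ with $\eta(G_v)=2$, so
$$\Phi(K_3,G_v)=\min\{4,4\}=4=\min\{6,4\}=\Phi(K_3,G),$$
and the hypothesis holds for the non-free vertex $v$. However, regularity is additive over connected components (the tensor-product decomposition used in the proof of Theorem \ref{thmregcomp}), and $\mathrm{reg}(S/\mathcal{J}_{K_3,P_3})=2$, $\mathrm{reg}(S/\mathcal{J}_{K_3,K_2})=1$, so $\mathrm{reg}(S/\mathcal{J}_{K_3,G})=3<4=n-1$. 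The same example also refutes your reduced claim, since $\mathrm{reg}(S/\mathcal{J}_{K_3,K_3\sqcup K_2})=3<4$; more generally, any disconnected graph with $c\ge 2$ components has $\mathrm{reg}\le n-c<n-1$, while the hypothesis can still be satisfied.

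You should know that the paper's own proof does not establish the equality either: it only observes that $\eta(G_v)<\eta(G)$ forces the minimum defining $\Phi(K_m,G)$ to be attained at $n-1$, i.e.\ $\Phi(K_m,G)=n-1$, and then asserts that ``the result follows.'' That argument yields only $\mathrm{reg}(S/\mathcal{J}_{K_m,G})\le n-1$; no lower-bound argument appears at all. So your diagnosis that all the content of the stated equality lies in an unproved lower bound is correct, and your inability to supply that bound is not a deficiency of your method --- the statement itself is wrong as written. What both your first step and the paper's argument actually prove is the weaker (and correct) assertion that, under the hypothesis, the combinatorial bound of Theorem \ref{thmcomp} collapses to the trivial bound $n-1$.
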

\begin{proof}
From the proof of Theorem \ref{thmcomp}, it is clear that any non-free vertex $v\in V(G)$ satisfies the condition $(3)$ of $r$-compatible map if $\Phi$ is defined by $\Phi(K_m,G)=\mathrm{min}\{(m-1)\eta(G), n-1\}$. Since $\eta(G_{v})<\eta(G)$ holds for any non-free vertex $v\in V(G)$ by  \cite[Theorem 2.5]{rsmkconj21}, the result follows.
\end{proof}

\section{A class of graphs attaining the upper bound}\label{secexample}

In this section, we give an infinite class of graphs (not only the disjoint union of complete graphs) for which the bound given in Theorem \ref{thmcomp} is attained.
\medskip

In Graph Theory, there is a popular class of chordal graphs, namely, \textit{block graphs} or \textit{clique trees}. A graph $G$ is said to be a \textit{block graph} if every block (i.e., maximal biconnected induced subgraph) of $G$ is a clique. In the study of binomial edge ideals, block graphs are well studied. In \cite[Theorem 1.1]{ehh_cmbin}, Ene et al. completely classified the class of block graphs with Cohen-Macaulay binomial edge ideals. Let $\mathcal{B}$ be the class of block graphs such that a vertex can belong to at most two maximal cliques. Then $\mathcal{B}$ is precisely the class of block graphs whose binomial edge ideals are Cohen-Macaulay. Due to \cite[Corollary 3.3]{jnr19}, we get $\mathrm{reg}(S/J_{G})=c(G)$ for all $G\in \mathcal{B}$. Corresponding to every natural number, we will consider a subclass of block graphs with Cohen-Macaulay binomial edge ideals. For $m\in \mathbb{N}$, let $\mathcal{B}_{m}\subseteq \mathcal{B}$ be the collection of those graphs $G$ whose every maximal clique contains at least $m+r$ number of vertices, where $r$ is the number of cut vertices of $G$ belong to that clique. 

\begin{theorem}\label{thmBm}
Let $m\in\mathbb{N}$. Then for every $G\in \mathcal{B}_{m}$, we have
$$\mathrm{reg}(S/\mathcal{J}_{K_m,G})=(m-1)c(G)=(m-1)\eta(G).$$
\end{theorem}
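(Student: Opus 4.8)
The plan is to prove the two equalities by establishing a chain of inequalities and then showing they collapse. First I would verify the purely combinatorial identity $c(G)=\eta(G)$ for block graphs $G\in\mathcal{B}$. Since every block is a clique and each maximal clique is a distinct clique of $G$, selecting one edge from each maximal clique yields a clique disjoint edge set, giving $\eta(G)\ge c(G)$; combined with the general inequality $\eta(G)\le c(G)$ noted in the remark above, this forces $\eta(G)=c(G)$. This identity lets me replace $\eta$ by $c$ throughout and reduces the problem to showing $\mathrm{reg}(S/\mathcal{J}_{K_m,G})=(m-1)c(G)$.

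For the upper bound, I would simply invoke Theorem \ref{thmcomp}: we have $\mathrm{reg}(S/\mathcal{J}_{K_m,G})\le\min\{(m-1)\eta(G),n-1\}\le(m-1)\eta(G)=(m-1)c(G)$. So the real content is the matching lower bound $\mathrm{reg}(S/\mathcal{J}_{K_m,G})\ge(m-1)c(G)$, and this is the step I expect to be the main obstacle. My strategy here would be to induct on $c(G)$, peeling off one maximal clique at a time along a leaf block of the clique tree. Concretely, I would choose a leaf clique $C$ meeting the rest of $G$ in a single cut vertex $v$; the defining condition of $\mathcal{B}_m$ (every maximal clique has at least $m+r$ vertices, with $r$ the number of its cut vertices) guarantees $\abs{V(C)}\ge m+1$ when $C$ is a leaf, so that $\min\{m-1,\abs{V(C)}-1\}=m-1$. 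The aim is to use a short exact sequence or a localization/restriction argument to split off the contribution $(m-1)$ of this clique from a smaller graph $G'\in\mathcal{B}_m$ with $c(G')=c(G)-1$.

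The delicate point is producing a lower bound rather than an upper bound, since the short exact sequence in Theorem \ref{thmregcomp} naturally yields upper bounds. To get the reverse inequality I would instead look for a nonzero graded Betti number in the correct homological position: for a disjoint union of complete graphs $\bigsqcup K_{n_i}$ one has $\mathrm{reg}(S/\mathcal{J}_{K_m,\bigsqcup K_{n_i}})=\sum_i\min\{m-1,n_i-1\}$ by \cite[Proposition 3.3]{arvindgen20} (as used in the proof of Theorem \ref{thmregcomp}), so the extremal configuration is well understood. I would try to exhibit, via the minimal prime $P_T(K_m,G)$ corresponding to the cutset $T$ consisting of all cut vertices of $G$, that $G-T$ is a disjoint union of complete graphs whose regularity already equals $(m-1)c(G)$, and then argue that this regularity survives in $S/\mathcal{J}_{K_m,G}$. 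Alternatively, since each $G\in\mathcal{B}_m\subseteq\mathcal{B}$ has Cohen-Macaulay binomial edge ideal with $\mathrm{reg}(S/J_G)=c(G)$ by \cite[Corollary 3.3]{jnr19}, I would hope to lift the known $m=2$ extremality to general $m$ by an inductive argument on $m$, tracking how each increment of $m$ contributes exactly one additional unit of regularity per maximal clique. The heart of the difficulty is this lower bound: one must show the upper bound from Theorem \ref{thmcomp} is sharp, which requires a genuine non-vanishing result rather than the inequality-chasing that suffices for the bound itself.
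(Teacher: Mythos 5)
Your reduction of the problem is correct as far as it goes: the identity $c(G)=\eta(G)$ for block graphs (one edge per maximal clique, combined with $\eta\le c$) and the upper bound via Theorem \ref{thmcomp} are exactly right, and your ``Strategy B'' even picks out the same extremal configuration the paper uses, namely $T=$ the set of all cut vertices, so that $G-T$ is a disjoint union of complete graphs each of which, by the defining condition of $\mathcal{B}_m$, retains at least $m$ vertices and hence contributes exactly $m-1$ to the regularity, for a total of $(m-1)c(G)$ by \cite[Proposition 3.3]{arvindgen20}. But the step you flag as the main obstacle --- ``argue that this regularity survives in $S/\mathcal{J}_{K_m,G}$'' --- is a genuine gap as you have framed it, and none of the three mechanisms you sketch closes it. Passing to the minimal prime $P_T(K_m,G)$ does not work on its own: although $S/P_T(K_m,G)\cong S_H/\mathcal{J}_{K_m,H}$ for $H=G-T$ (the components are already complete, so no completion occurs), regularity is not monotone along the surjection $S/\mathcal{J}_{K_m,G}\twoheadrightarrow S/P_T(K_m,G)$; a minimal prime of a radical ideal can have strictly larger regularity than the ideal itself, so no inequality in the needed direction follows from primary decomposition alone. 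The Betti-number non-vanishing hunt and the induction on $m$ (or on $c(G)$ along leaf blocks) are likewise left undeveloped and each would require real work.

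The missing ingredient is a known monotonicity result, cited in the paper as \cite[Proposition 8]{skpair13}: if $H$ is an \emph{induced subgraph} of $G$, then $\mathrm{reg}(S_H/\mathcal{J}_{K_m,H})\le \mathrm{reg}(S/\mathcal{J}_{K_m,G})$. Since $H=G-T$ is an induced subgraph of $G$, this immediately gives
$$\mathrm{reg}(S/\mathcal{J}_{K_m,G})\ \ge\ \mathrm{reg}(S_H/\mathcal{J}_{K_m,H})\ =\ \sum_{i=1}^{k}\min\{m-1,n_i-1\}\ =\ (m-1)c(G),$$
which, combined with your upper bound, finishes the proof. So the correct reading of your configuration is ``induced subgraph,'' not ``minimal prime'': the two give the same quotient ring here, but only the induced-subgraph viewpoint comes with the regularity inequality you need. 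With that single substitution your outline becomes the paper's proof; without it, the lower bound remains unproved.
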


\begin{proof}
Let $G\in \mathcal{B}_{m}$ be a graph and $V$ be the set of cut vertices of $G$. Since $G$ is a block graph, it is clear from the definition that $c(G)=\eta(G)=k$ (say). Consider the graph $H=G - V$. Then $H$ is a disjoint union of $k$ complete graphs. Let $H=\bigsqcup_{i=1}^k K_{n_{i}}$. Since $G\in \mathcal{B}_{m}$, each maximal cliques of $G$ contains at least $m$ free vertices of $G$. Hence, after removing all the cut vertices of $G$, every connected component will contain at least $m$ vertices. Therefore, we have $n_{i}\geq m$ for all $1\leq i\leq k$. Then by \cite[Proposition 3.3]{arvindgen20},
\begin{align*}
\mathrm{reg}(S/\mathcal{J}_{K_{m}, H})&=\sum_{i=1}^{k}\mathrm{min}\{ m-1,n_i-1\}\\
&=\sum_{i=1}^{k} (m-1)\\
&=(m-1)k.
\end{align*}
$H$ being an induced subgraph of $G$, by \cite[Proposition 8]{skpair13}, it follows that $\mathrm{reg}(S/\mathcal{J}_{K_{m},G})\geq \mathrm{reg}(S/\mathcal{J}_{K_{m}, H})=(m-1)k$. Again by Theorem \ref{thmcomp}, we have $\mathrm{reg}(S/\mathcal{J}_{K_m,G})\leq (m-1)k$. Hence, the assertion follows.
\end{proof}

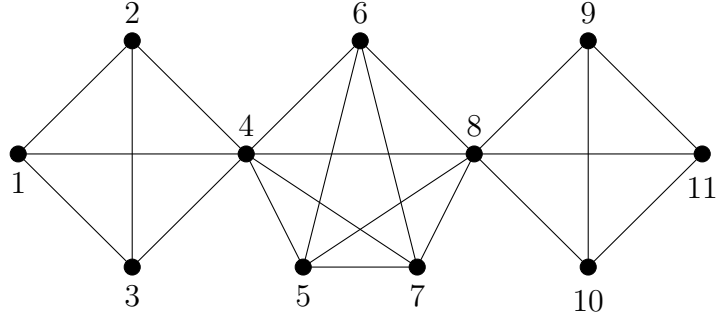
\begin{figure}[H]
	\centering
\begin{tikzpicture}
[scale=1.5,auto=left,every node/.style={circle,scale=1}]

\filldraw[black] (0,0) circle (2pt)node[anchor=north]{$1$};
\filldraw[black] (1,1) circle (2pt)node[anchor=south]{$2$};
\filldraw[black] (1,-1) circle (2pt)node[anchor=north]{$3$};
\filldraw[black] (2,0) circle (2pt)node[anchor=south]{$4$};
\filldraw[black] (2.5,-1) circle (2pt)node[anchor=north]{$5$};
\filldraw[black] (3,1) circle (2pt)node[anchor=south]{$6$};
\filldraw[black] (3.5,-1) circle (2pt)node[anchor=north]{$7$};
\filldraw[black] (4,0) circle (2pt)node[anchor=south]{$8$};
\filldraw[black] (5,1) circle (2pt)node[anchor=south]{$9$};
\filldraw[black] (5,-1) circle (2pt)node[anchor=north]{$10$};
\filldraw[black] (6,0) circle (2pt)node[anchor=north]{$11$};

\draw[black] (0,0) -- (1,1) -- (2,0) -- (1,-1) -- cycle;
\draw[black] (2,0) -- (2.5,-1) -- (3.5,-1) -- (4,0) -- (3,1) -- cycle;
\draw[black] (4,0) -- (5,1) -- (6,0) -- (5,-1) -- cycle;
\draw[black] (0,0) -- (2,0);
\draw[black] (1,1) -- (1,-1);
\draw[black] (2,0) -- (3.5,-1);
\draw[black] (2,0) -- (4,0);
\draw[black] (3,1) -- (2.5,-1);
\draw[black] (3,1) -- (3.5,-1);
\draw[black] (4,0) -- (2.5,-1);
\draw[black] (4,0) -- (6,0);
\draw[black] (5,1) -- (5,-1);

\end{tikzpicture}
\caption{Graph $G$ such that $\mathrm{reg}(S/\mathcal{J}_{K_3,G})=(3-1)\eta(G)=6$.}\label{fig1}
\end{figure}

\begin{example}{\rm
Let us consider the pair of graphs $(K_3,G)$, where $G$ is the graph given in Figure \ref{fig1}. Note that $G$ is a block graph with $c(G)=\eta(G)=3$. Number of cut vertices of $G$ belong to each $K_4$ block is $1$ and the block $K_5$ contains $2$ cut vertices of $G$. Then $G\in \mathcal{B}_{3}$. Therefore, by Theorem \ref{thmBm}, we get $\mathrm{reg}(S/\mathcal{J}_{K_{3},G})=(3-1)\eta(G)=6$. In this case, $\mathrm{min}\{(m-1)\eta(G),n-1\}=\mathrm{min}\{6,10\}=6$, i.e., the pair $(K_{3},G)$ attains the upper bound given in Theorem \ref{thmcomp}.
}
\end{example}

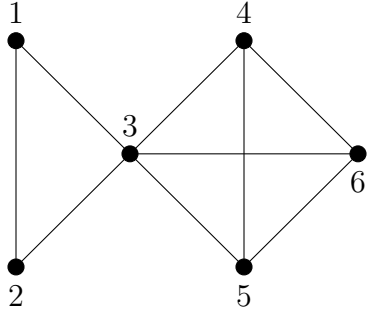
\begin{figure}[H]
	\centering
\begin{tikzpicture}
[scale=1.5,auto=left,every node/.style={circle,scale=1}]

\filldraw[black] (1,1) circle (2pt)node[anchor=south]{$1$};
\filldraw[black] (1,-1) circle (2pt)node[anchor=north]{$2$};
\filldraw[black] (2,0) circle (2pt)node[anchor=south]{$3$};
\filldraw[black] (3,1) circle (2pt)node[anchor=south]{$4$};
\filldraw[black] (3,-1) circle (2pt)node[anchor=north]{$5$};
\filldraw[black] (4,0) circle (2pt)node[anchor=north]{$6$};

\draw[black] (1,1) -- (1,-1) -- (2,0) -- cycle;
\draw[black] (2,0) -- (3,1) -- (4,0) -- (3,-1) -- cycle;
\draw[black] (2,0) -- (4,0);
\draw[black] (3,-1) -- (3,1);

\end{tikzpicture}

\caption{Graph $G\in\mathcal{B}\setminus \mathcal{B}_{3}$ and $\mathrm{reg}(S/\mathcal{J}_{K_3,G})<\mathrm{min}\{2\eta(G),5\}$.} \label{fig2}
\end{figure}

\begin{example}{\rm
We consider the pair of graphs of $(K_{3},G)$, where $G$ is the block graph given in the Figure \ref{fig2}. Note that $c(G)=\eta(G)=2$ and the number of cut vertices of $G$ belong to each maximal clique is $1$. The maximal clique $K_{3}$ does not contain $3+1$ vertices and thus, $G\not\in \mathcal{B}_{3}$. Using Macaulay2 (\cite{mac2}), we  calculate $\mathrm{reg}(S/\mathcal{J}_{K_{3},G})=3$, where $S=\mathbb{Q}[x_{ij}\mid 1\leq i\leq 3,\, 1\leq j\leq 6]$. In this case, $\mathrm{min}\{(m-1)\eta(G), n-1\}=4> \mathrm{reg}(S/\mathcal{J}_{K_{3},G})$. Hence, the assumption of taking the particular class of block graphs in Theorem \ref{thmBm} to attained the upper bound given in Theorem \ref{thmcomp} is justified.
}
\end{example}

\printbibliography

\end{document}